\tikzstyle{vertex}=[circle, draw, minimum size=1pt]
\newcommand{\vertex}{\node[vertex]}
\author{B\'alint V\'as\'arhelyi\footnote{Szegedi Tudom\'anyegyetem, TTIK, Szeged, 6720, Hungary. E-mail: mesti@math.u-szeged.hu}}
\title{An Estimation of the Size of Non-Compact Suffix Trees}
\date{}
\newtheorem{theorem}{Theorem}
\newtheorem{definition}[theorem]{Definition}
\newtheorem{lemma}[theorem]{Lemma}
\newtheorem{corollary}[theorem]{Corollary}
\newtheorem{observation}[theorem]{Observation}
\theoremstyle{definition}
\newtheorem{algorithm}[theorem]{Algorithm}
\newcommand{\summ}{\sum\limits}			
\newcommand{\ee}{\mathbb{E}}
\newcommand{\pp}{\mathcal{P}}
\renewcommand{\ll}{\mathcal{L}}
\newcommand{\ttt}{\mathcal{T}}
\renewcommand{\(}{\left(}
\renewcommand{\)}{\right)}
\begin{document}

\maketitle
\normalsize


\begin{abstract}
A suffix tree is a data structure used mainly for pattern matching. It is known that the space complexity of simple suffix trees is quadratic in the length of the string. By a slight modification of the simple suffix trees one gets the compact suffix trees, which have linear space complexity. The motivation of this paper is the question whether the space complexity of simple suffix trees is quadratic not only in the worst case, but also in expectation.
\end{abstract}

\section{Introduction}

A suffix tree is a powerful data structure which is used for a large number of combinatorial problems involving strings. Suffix tree is a structure for compact storage of the suffixes of a given string. The compact suffix tree is a modified version of the suffix tree, and it can be stored in linear space of the length of the string, while the non-compact suffix tree is quadratic (see \cite{gusfield,mccreight,ukkonen,weiner}).

The notion of suffix trees was first introduced by Weiner \citep{weiner}, though he used the name compacted bi-tree. Grossi and Italiano mention that in the scientific literature, suffix trees have been rediscovered many times, sometimes under different names, like compacted bi-tree, prefix tree, PAT tree, position tree, repetition finder, subword tree etc. \cite{grossi} .

Linear time and space algorithms for creating the compact suffix tree were given soon by Weiner \cite{weiner}, McCreight \cite{mccreight}, Ukkonen \cite{ukkonen}, Chen and Sciferas \cite{sciferas}
 and others.
 
The statistical behaviour of suffix trees has been also studied. Most of the studies consider improved versions.

The average size of compact suffix trees was examined by Blumer, Ehrenfeucht and Haussler \cite{blumer}. They proved that the average number of nodes in the compact suffix tree is asymptotically the sum of an oscillating function and a small linear function.

An important question is the height of suffix trees, which was answered by Devroye, Szpankowski and Rais \cite{devroye}, who proved that the expected height is logarithmic in the length of the string.

The application of suffix trees is very wide. We mention but only a few examples. Apostolico et al. \cite{apostolico} mention that these structures are used in text searching, indexing, statistics, compression. In computational biology, several algorithms are based on suffix trees. Just to refer a few of them, we mention the works of H\"ohl et al. \cite{hohl}, Adebiyi et al. \cite{adebiyi} and Kaderali et al. \cite{kaderali}

Suffix trees are also used for detecting plagiarism \cite{apostolico}, in cryptography \cite{oconnor,rodeh}, in data compression \cite{fiala,fraser,rodeh} or in pattern recognition \cite{tanimoto}.

For the interested readers further details on suffix trees, their history and their applications can be found in \cite{apostolico}, in \cite{grossi} and  in \cite{gusfield}, which sources we also used for the overview of the history of suffix trees.

It is well-known that the non-compact suffix tree can be quadratic in space as we referred before. In our paper we are setting a lower bound on the average size, which is also quadratic.

\section{Preliminaries}

Before we turn to our results, let us define a few necessary notions.

\begin{definition}
An \emph{alphabet} $\Sigma$ is a set of different characters. The \emph{size} of an alphabet is the size of this set, which we denote by $\sigma(\Sigma)$, or more simply $\sigma$. A string $S$ is over the alphabet $\Sigma$ if each character of $S$ is in $\Sigma$.
\end{definition}

\begin{definition}
Let $S$ be a string. $S[i]$ is its \emph{i}th character, while $S[i,j]$ is a \emph{substring} of $S$, from $S[i]$ to $S[j]$, if $j\geq i$, else $S[i,j]$ is the empty string. Usually $n(S)$ (or $n$ if there is no danger of confusion) denotes the \emph{length} of the string.
\end{definition}

\begin{definition}
The \emph{suffix tree} of $S$ is a rooted directed tree with $n$ leaves, where $n$ is the length of $S$.

Its structure is the following:

\setlength{\leftskip}{3ex} Each edge $e$ has a \emph{label} $\ell(e)$, and the edges from a node $v$ have different labels (thus, the suffix tree of a string is unique). If we concatenate the edge labels along a path $\pp$, we get the \emph{path label} $\ll(\pp)$.

We denote the path from the root to the leaf $j$ by $\pp(j)$. The edge labels are such that $\ll(j) = \ll(\pp(j))$ is $S[j,n]$ and a $\$$ sign at the end. The definition becomes more clear if we check the example on \autoref{fig1} and \autoref{naiv_algorithm}.

\end{definition}

\setlength{\leftskip}{0ex}

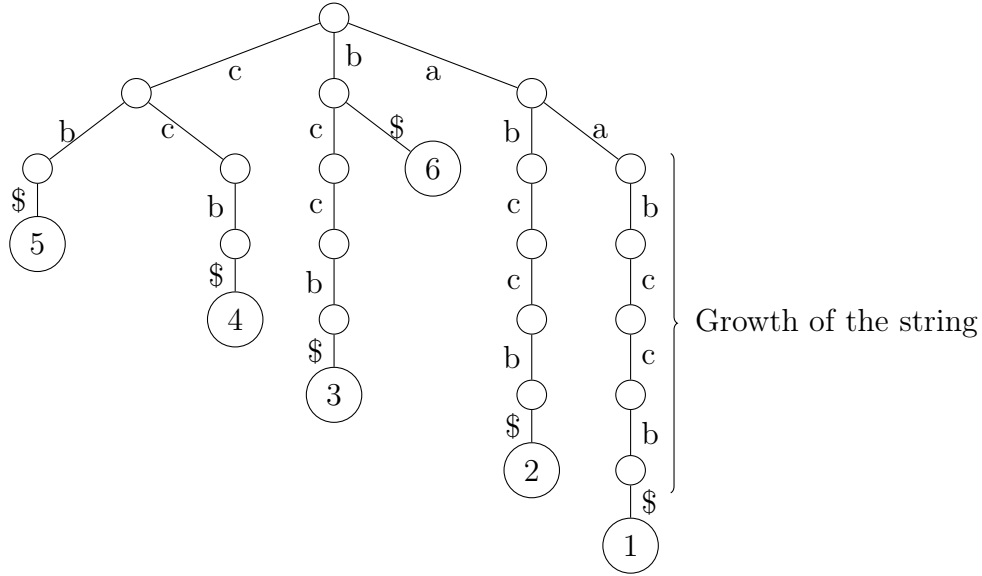
\begin{figure}[ht]
	\centering
\begin{tikzpicture}[x=1.3cm, y=1cm, every edge/.style={ draw, }]
	\vertex (r) at (3,3) {};
	\vertex (i1) at (1,2) {};
	\vertex (i11) at (0,1) {};
	\vertex (i12) at (2,1) {};
	\vertex (i121) at (2,0) {};
	\vertex (i2) at (3,2) {};
	\vertex (i21) at (3,1) {};
	\vertex (i211) at (3,0) {};
	\vertex (i2111) at (3,-1) {};
	\vertex (i3) at (5,2) {};
	\vertex (i31) at (5,1) {};
	\vertex (i311) at (5,0) {};
	\vertex (i3111) at (5,-1) {};
	\vertex (i31111) at (5,-2) {};
	\vertex (i32) at (6,1) {};
	\vertex (i321) at (6,0) {};
	\vertex (i3211) at (6,-1) {};
	\vertex (i32111) at (6,-2) {};
	\vertex (i321111) at (6,-3) {};
	\vertex (5) at (0,0) {$5$};
	\vertex (4) at (2,-1) {$4$};
	\vertex (3) at (3,-2) {$3$};
	\vertex (6) at (4,1) {$6$};
	\vertex (2) at (5,-3) {$2$};
	\vertex (1) at (6,-4) {$1$};
	\path
		(r) edge node[below]{c} (i1)
		(r) edge node[right]{b} (i2)
		(r) edge node[below]{a} (i3)
		(i1) edge node[left]{b} (i11)
		(i11) edge node[left]{\$} (5)
		(i1) edge node[left]{c} (i12)
		(i12) edge node[left]{b} (i121)(4)
		(i121) edge node[left]{\$} (4)
		(i2) edge node[left]{c} (i21)
		(i21) edge node[left]{c} (i211)
		(i211) edge node[left]{b} (i2111)
		(i2111) edge node[left]{\$} (3)
		(i2) edge node[right]{\$} (6)
		(i3) edge node[left]{b} (i31)
		(i31) edge node[left]{c} (i311)
		(i311) edge node[left]{c} (i3111)
		(i3111) edge node[left]{b} (i31111)
		(i31111) edge node[left]{\$} (2)
		(i3) edge node[right]{a} (i32)
		(i32) edge node[right]{b} (i321)
		(i321) edge node[right]{c} (i3211)
		(i3211) edge node[right]{c} (i32111)
		(i32111) edge node[right]{b} (i321111)
		(i321111) edge node[right]{\$} (1)
	;
	\draw[decoration={brace,raise=15pt},decorate]
	(6,1.2) -- node[right=20pt] {Growth of the string} (6,-3.3);
\end{tikzpicture}
\caption{\protect\label{fig1} Suffix tree of string $aabccb$}
\end{figure}

A naive algorithm for constructing the suffix tree is the following:

\begin{algorithm}
	\label{naiv_algorithm}
	Let $S$ be a string of length $n$. Let $j=1$ and $T$ be a tree of one vertex $r$ (the root of the suffix tree).
	\setlength{\parindent}{24pt}
	\begin{enumerate}[label=\textbf{Step \arabic*:}]
		\setlength{\itemindent}{24pt}
		\item Consider $X = S[j,n]+\$$. Set $i=0$, and $v=r$.
		\item If there is an edge $vu$ labelled $X[i+1]$, then set $v=u$ and $i=i+1$.
		\item Repeat Step 2 while it is possible.
		\item If there is no such an edge, add a path of $n-j-i+2$ edges from $v$, with labels corresponding to $S[j+i,n]+\$$, consecutively on the edges. At the end of the path, number the leaf with $j$.
		\item Set $j=j+1$, and if $j\leq n$, go to Step 1.\null\hfill$\diamond$
	\end{enumerate}
	\setlength{\parindent}{0pt}
\end{algorithm}

Notice that in \autoref{naiv_algorithm} a leaf always remain a leaf, as \$ (the last edge label before a leaf) is not a character in $S$.

\begin{definition}
The \emph{compact suffix tree} is a modified version of the suffix tree. We get it from the suffix tree by compressing its long branches.

\end{definition}
The structure of the compact suffix tree is basically similar to that of the suffix tree, but an edge label can be longer than one character, and each internal node (i.e. not leaf) must have at least two children. For an example see \autoref{fig2}.

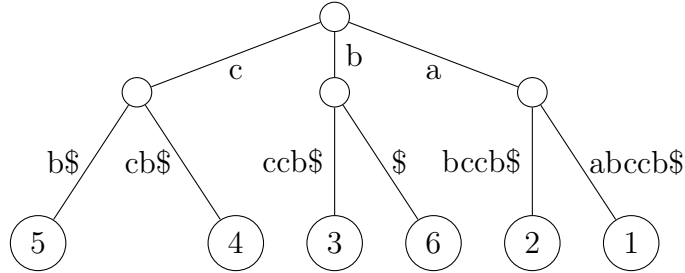
\begin{figure}[ht]
	\centering
	\begin{tikzpicture}[x=1.3cm, y=1cm, every edge/.style={ draw, }]
	\vertex (r) at (3,3) {};
	\vertex (i1) at (1,2) {};
	\vertex (i2) at (3,2) {};
	\vertex (i3) at (5,2) {};
	\vertex (5) at (0,0) {$5$};
	\vertex (4) at (2,0) {$4$};
	\vertex (3) at (3,0) {$3$};
	\vertex (6) at (4,0) {$6$};
	\vertex (2) at (5,0) {$2$};
	\vertex (1) at (6,0) {$1$};
	\path
	(r) edge node[below]{c} (i1)
	(r) edge node[right]{b} (i2)
	(r) edge node[below]{a} (i3)
	(i1) edge node[left]{b\$} (5)
	(i1) edge node[left]{cb\$} (4)
	(i2) edge node[left]{ccb\$} (3)
	(i2) edge node[right]{\$} (6)
	(i3) edge node[left]{bccb\$} (2)
	(i3) edge node[right]{abccb\$} (1)
	;
	\end{tikzpicture}
	\caption{\protect\label{fig2} Compact tree of string $aabccb$}
\end{figure}

With a regard to suffix trees, we can define further notions for strings.

\begin{definition}
Let $S$ be a string, and $\ttt$ be its (non-compact) suffix tree.

A \emph{natural direction} of $\ttt$ is that all edges are directed from the root towards the leaves. If there is a directed path from $u$ to $v$, then $v$ is a \emph{descendant} of $u$ and $u$ is an \emph{ancestor} of $v$.

We say that \emph{the growth of} $S$ (denoted by $\gamma(S)$) is one less than the shortest distance of leaf 1 from an internal node $v$ which has at least two children (including leaf 1), that is, we count the internal nodes on the path different from $v$. If leaf $j$ is a descendant of $v$, then the common prefix of $S[j,n]$ and $S[1,n]$ is the longest among all $j$'s.

\end{definition}
If we consider the string $S=aabccb$, the growth of $S$ is 5, as it can be seen on \autoref{fig1}.


An important notion is the following one.

\begin{definition}
Let $\Omega(n,k,\sigma)$ be the number of strings of length $n$ with growth $k$ over an alphabet of size $\sigma$.
\end{definition}

Observe that the connection between the growth and the number of nodes in a suffix tree is the following:
\begin{observation}
\label{observation8}
If we construct the suffix tree of $S$ by using \autoref{naiv_algorithm}, we get that the sum of the growths of $S[n-1,n],S[n-2,n],\ldots,S[1,n]$ is a lower bound to the number of nodes in the final suffix tree. In fact, there are only two more internal nodes, the root vertex, the only node on the path to leaf $n$, and we have the leaves.
\end{observation}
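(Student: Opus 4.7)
The plan is to exploit the fact that \autoref{naiv_algorithm} is fully deterministic at every step (edges at a vertex have distinct labels), so its output is uniquely determined by $S$ and independent of the order in which the suffixes are inserted. I would therefore re-run the algorithm in the reverse order $S[n,n], S[n-1,n], \ldots, S[1,n]$ and count, step by step, how many new nodes each insertion contributes.

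First I would handle the base step: inserting $S[n,n]$ from the empty tree creates a path root--$u$--(leaf $n$) whose edges carry labels $S[n]$ and $\$$, so we obtain the root, a single further internal node $u$, and one leaf. These two internal nodes are exactly the two ``extra'' ones in the statement, with $u$ the lone internal node on the path to leaf $n$. Next, I would argue inductively that after inserting $S[n,n],\ldots,S[j+1,n]$ the current tree coincides, up to leaf labelling, with the suffix tree of the string $S[j+1,n]$---this is again the uniqueness of suffix trees. When the algorithm now processes $S[j,n]$, it follows from the root the deepest prefix of $S[j,n]$ it can match, which by the structure of a suffix tree is precisely the longest common prefix of $S[j,n]$ with one of $S[k,n]$, $k>j$; call its length $\mathrm{LCP}_j$. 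The algorithm then hangs a fresh path of $(n-j+2) - \mathrm{LCP}_j$ new edges carrying the remaining characters of $S[j,n]\$$, ending in a new leaf. Hence this step contributes $(n-j+1) - \mathrm{LCP}_j$ new internal nodes together with one new leaf.

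The last piece is to identify $(n-j+1) - \mathrm{LCP}_j$ with $\gamma(S[j,n])$. In the suffix tree of $S[j,n]$, the path from the root to leaf $1$ has $(n-j+1)+1$ edges; its deepest branching node $v$ sits at depth $\mathrm{LCP}_j$, so the distance from $v$ to leaf $1$ equals $(n-j+2) - \mathrm{LCP}_j$, and the definition gives $\gamma(S[j,n])$ as one less, namely $(n-j+1) - \mathrm{LCP}_j$. Summing over $j=1,\ldots,n-1$ yields $\sum_{j=1}^{n-1} \gamma(S[j,n])$ new internal nodes; adding the two from the base step and the $n$ leaves (one per insertion) gives the exact node count and, a fortiori, the claimed lower bound. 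The only subtle point---and therefore the one I would take most care over---is the inductive structural claim that the intermediate tree really agrees with the suffix tree of $S[j+1,n]$, since every arithmetic step in the count rests on it.
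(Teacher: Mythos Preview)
Your argument is correct. The paper states this result as an \emph{observation} and gives no proof at all, so there is nothing to compare against line by line; you have supplied exactly the justification the paper omits. The one genuinely non-trivial step---reversing the insertion order so that the intermediate tree after processing $S[n,n],\ldots,S[j+1,n]$ is (up to leaf labels) the suffix tree of $S[j+1,n]$---is the right idea, and it is what makes the identification ``new internal nodes at step $j$'' $=\gamma(S[j,n])$ go through. Your arithmetic matches the paper's example: for $S=aabccb$ one gets growths $2,2,3,5,5$ summing to $17$, plus the root and the single internal node on the path to leaf~$6$, plus the six leaves, for $25$ nodes, which agrees with Figure~1. The inductive structural claim you flag as delicate is in fact immediate once you observe that a non-compact suffix tree is just the trie of the set $\{S[k,n]\$:k\ge j+1\}$, and a trie depends only on the set of strings inserted, not the order; you might state it that way to make the point self-evident.
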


In the proofs we will need the notion of period and of aperiodic strings.

\begin{definition}
Let $S$ be a string of length $n$. We say that $S$ is \emph{periodic} with period $d$, if there is a $d|n$ for which $S[i] = S[i+d]$ for all $i\leq n-d$. Otherwise, $S$ is \emph{aperiodic}.

The \emph{minimal period} of $S$ is the smallest $d$ with the property above.
\end{definition}

\begin{definition}
$\mu(j,\sigma)$ is the number of $j$-length aperiodic strings over an alphabet of size $\sigma$.
\end{definition}

A few examples for the number of aperiodic strings are given in \autoref{tab:mux}.

\begin{table*}[hbt]
	\centering
	\begin{tabular}{|l|cc|cc|cc|cc|}\hline
		$\sigma$&$\mu(1,\sigma)$&$\mu(2,\sigma)$&$\mu(3,\sigma)$&$\mu(4,\sigma)$&$\mu(5,\sigma)$&$\mu(6,\sigma)$&$\mu(7,\sigma)$&$\mu(8,\sigma)$\\\hline
		2&2& 6& 12& 30& 54& 126& 240& 504\\\hline
		3&3&6&24&72&240&696&2184&648\\\hline
		4&4&12&60&240&1020&4020&16380&65280\\\hline
		5&5&20&120&600&3120&15480&78120&390000\\\hline
	\end{tabular}
	\caption{Number of aperiodic strings for small alphabets. $\sigma$ is the size of the alphabet, and $\mu(j,\sigma)$ is the number of aperiodic strings of length $j$}
	\label{tab:mux}
\end{table*}

\section{Main results}

Our main results are formulated in the following theorems.

\begin{theorem}
	\label{tetel}
	On an alphabet of size $\sigma$ for all $n\geq 2k$, $\Omega(n,k,\sigma) \leq \phi(k,\sigma)$ for some function $\phi$.
\end{theorem}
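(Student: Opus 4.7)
The plan is to recast the growth condition as a periodicity condition on $S$, canonicalize by the minimal valid shift, and count. Specifically, $\gamma(S) = k$ means the longest common prefix of $S[1,n]$ with some other suffix $S[j,n]$ has length exactly $n-k$; setting $d := j - 1$, this translates to $S[i] = S[i+d]$ for all $i = 1, \ldots, n-k$. Since this shared prefix has length at most $n - d$, we must have $d \leq k$, and combined with $n \geq 2k$ we obtain $1 \leq d \leq k \leq n - k$, so the period fits comfortably inside the constrained prefix.

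For each such $S$, let $d^*(S) := \min\{d \geq 1 : S[i] = S[i+d]\text{ for all }1 \leq i \leq n-k\}$, so $d^*(S) \in \{1, \ldots, k\}$. The crucial claim is that $S[1, d^*]$ is aperiodic in the paper's sense: if some proper divisor $d' \mid d^*$ with $d' < d^*$ were a period of $S[1, d^*]$, then $S[1, d^*]$ would consist of $d^*/d'$ copies of $S[1, d']$, and the period-$d^*$ extension $S[1, n - k + d^*]$ would inherit period $d'$, making $d'$ itself a valid shift and contradicting the minimality of $d^*$. Given $d^*$, the aperiodic prefix $S[1, d^*]$ (one of $\mu(d^*, \sigma)$ choices) determines $S[1, n-k+d^*]$ by period-$d^*$ repetition, while the $k - d^*$ positions $S[n-k+d^*+1, n]$ are unconstrained. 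Summing over $d^* \in \{1, \ldots, k\}$ then gives
$$
\Omega(n, k, \sigma) \;\leq\; \summ_{d=1}^{k} \mu(d, \sigma)\, \sigma^{k-d} \;=:\; \phi(k, \sigma),
$$
which depends only on $k$ and $\sigma$, as required.

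The main subtlety is the aperiodicity claim: since the paper's definition of period requires it to divide the string length, only divisor periods of $S[1, d^*]$ can threaten the minimality of $d^*$, and one must verify that such a divisor period really does propagate to a shorter valid shift of $S[1, n-k]$. The hypothesis $n \geq 2k$ is essential here, guaranteeing $d^* \leq n - k$ so that the period completes at least one full cycle inside the constrained prefix; without this, the propagation argument breaks down and one cannot cleanly isolate an aperiodic prefix to count.
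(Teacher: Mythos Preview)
Your argument is correct and follows essentially the same route as the paper's: reinterpret growth $k$ as a period condition on the prefix $S[1,n-k]$, show that the block $S[1,j]$ (for the relevant shift $j$) must be aperiodic, and count the remaining free positions. The only cosmetic difference is that you canonicalize via the \emph{minimal} shift $d^*$ and leave all $k-d^*$ tail characters free, obtaining $\phi(k,\sigma)=\sum_{d=1}^{k}\mu(d,\sigma)\,\sigma^{k-d}$, whereas the paper sums over every admissible shift $j$ but uses the non-extension constraint at position $j+n-k+1$ to replace one factor of $\sigma$ by $\sigma-1$ when $j<k$; both versions of $\phi$ satisfy $\phi(k,\sigma)\le k\sigma^k$, which is all that the subsequent theorem requires.
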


\begin{theorem}
\label{tetel:2}
There is a $c>0$ and an $n_0$ such that for any $n>n_0$ the following is true. Let $S'$ be a string of length $n-1$, and $S$ be a string obtained from $S'$ by adding a character to its beginning chosen uniformly random from the alphabet. Then the expected growth of $S$ is at least $c\cdot n$.
\end{theorem}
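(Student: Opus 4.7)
My strategy is to reformulate the growth in terms of longest common prefixes and then invoke the Fine--Wilf periodicity theorem. Write $S = aS'$, and for each $i \in \{1,\ldots,n-1\}$ let $L_i$ be the length of the longest common prefix of $S'$ and the substring $S'[i+1,n-1]$; these quantities depend only on $S'$. A one-line case split on whether $S'[i] = a$ shows that the longest common prefix between $S$ and any other suffix of $S$ equals $\ell(a,S') := \max\{L_i + 1 : S'[i] = a\}$, with the empty maximum understood as $0$. Because the growth of $S$ equals $n - \ell(a,S')$ by definition, the theorem is equivalent to proving $\ee_a[\ell(a,S')] \leq (1-c)n$ for some absolute $c > 0$.

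I would fix a threshold $T$ slightly above $n/2$, say $T = \lceil n/2 \rceil + 2$, and call $a \in \Sigma$ \emph{bad} if there exists $i$ with $S'[i] = a$ and $L_i \geq T-1$. The crucial lemma is that \emph{at most one} character of $\Sigma$ can be bad. Suppose, for contradiction, that $a_1 \neq a_2$ are both bad, witnessed by positions $i_1 < i_2$. The prefix $P := S'[1,T-1]$, of length greater than $n/2$, then occurs in $S'$ at each of the three positions $1$, $i_1 + 1$, $i_2 + 1$; since $|P| > n/2$ forces $i_k \leq n-1-|P| < n/2$, the three occurrences pairwise overlap. Fine and Wilf's theorem, applied to the periods $i_1$ and $i_2$ of the initial segment $S'[1, i_1 + T - 1]$, then forces the combined period $d := \gcd(i_1, i_2)$ on a window still containing $i_1$ and $i_2$. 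Iterating the relation $S'[j] = S'[j - d]$ from $j = i_k$ down to $j = d$ yields $S'[i_1] = S'[d] = S'[i_2]$, contradicting $a_1 \neq a_2$.

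From the lemma, at most one $a$ has $\ell(a,S') \geq T$ while every other $a$ satisfies $\ell(a,S') \leq T-1$, so averaging gives
\[
\ee_a[\ell(a,S')] \;\leq\; \frac{n + (\sigma - 1)(T-1)}{\sigma} \;=\; \frac{\sigma+1}{2\sigma}\,n + O(1),
\]
and hence $\ee_a[\gamma(S)] \geq \frac{\sigma-1}{2\sigma}\,n - O(1) \geq \frac{n}{4} - O(1)$ for every $\sigma \geq 2$. This would prove the theorem with, say, $c = 1/5$ and a sufficiently large $n_0$. The hardest part will be the bookkeeping in the Fine--Wilf step: I have to verify both that the combined period $d$ really holds on a window extending past $\max(i_1, i_2)$, and that the downward chain $S'[i_k] = S'[i_k - d] = \cdots = S'[d]$ never leaves that window. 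The inequalities $|P| > n/2$ and $i_k < n/2$ are exactly what makes this go through, but the careful handling of edge cases (for instance $d = i_1$ or $d = 1$) is where most of the technical content of the proof lies.
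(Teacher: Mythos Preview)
Your proposal is correct and follows a genuinely different route from the paper. The paper proceeds by global counting: it uses the preceding theorem to show that the number of length-$n$ strings with growth exactly $k$ is at most $\phi(k,\sigma) = \mu(k,\sigma) + \sum_{j=1}^{k-1}\mu(j,\sigma)(\sigma-1)\sigma^{k-j-1}$, then bounds $\phi(k,\sigma) \le k\sigma^k$ and sums to conclude that at most $(m+1)\sigma^{m+1}$ of the $\sigma^n$ strings have growth at most $m = \lfloor n/2\rfloor$. Since this is $o(\sigma^n)$, the average growth over \emph{all} length-$n$ strings is at least $n/2 - o(n)$.

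Your argument is structural rather than enumerative: you fix $S'$ and show via Fine--Wilf that two distinct prepended characters cannot both produce a common prefix longer than roughly $n/2$, because the resulting overlapping occurrences of that prefix would force $S'[i_1] = S'[i_2]$. The bookkeeping you flag does go through: on the window $S'[1,i_1+|P|]$ both periods $i_1$ and $i_2$ hold, its length exceeds $i_1+i_2$ since $|P|>i_2$, so Fine--Wilf gives period $d=\gcd(i_1,i_2)$; and because $d\mid i_k$ the descending chain $S'[i_k]=S'[i_k-d]=\cdots=S'[d]$ stays inside that window. This yields $\ee_a[\gamma(aS')]\ge \tfrac{\sigma-1}{2\sigma}\,n - O(1)$ for \emph{every} $S'$, which is strictly stronger than what the paper's counting argument delivers (the latter effectively averages over $S'$ as well) and matches the ``let $S'$ be a string'' phrasing of the statement more literally. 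The paper's approach, in exchange, produces the reusable bound on $\Omega(n,k,\sigma)$ as a byproduct.
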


\begin{theorem}
\label{tetel:3}
There is a $d>0$ that for any $n>n_0$ (where $n_0$ is the same as in \autoref{tetel:2}) the following holds. On an alphabet of size $\sigma$ the simple suffix tree of a random string $S$ of length $n$ has at least $d\cdot n^2$ nodes in expectation.
\end{theorem}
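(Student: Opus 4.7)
The plan is to combine Observation~\ref{observation8} (which gives a combinatorial lower bound on the node count in terms of the growths of all suffixes) with Theorem~\ref{tetel:2} (which gives a linear lower bound on the expected growth of a single suffix), and then use linearity of expectation. Since $S$ is a uniformly random string of length $n$, each suffix $S[j,n]$ has length $m_j = n - j + 1$, and its first character $S[j]$ is uniformly distributed and independent of $S[j+1,n]$. This is exactly the setup of Theorem~\ref{tetel:2}: $S[j,n]$ is obtained from $S[j+1,n]$ by prepending a uniformly random character. Hence, for every $j$ with $m_j > n_0$,
\begin{equation*}
\ee\!\left[\gamma(S[j,n])\right] \;\geq\; c\cdot m_j \;=\; c\,(n-j+1),
\end{equation*}
where the expectation is over the random string $S$ (we condition on $S[j+1,n]$, apply Theorem~\ref{tetel:2}, then take the outer expectation).

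Next, I would invoke Observation~\ref{observation8}, which states that the number of nodes in the suffix tree of $S$ is at least $\summ_{j=1}^{n-1} \gamma(S[j,n])$ (plus a constant number of additional nodes, which we may safely discard). By linearity of expectation and dropping the contribution of the final $n_0$ suffixes (whose growth we cannot usefully bound), I obtain
\begin{equation*}
\ee[\text{\#nodes of }\ttt(S)] \;\geq\; \summ_{j=1}^{n-n_0} \ee[\gamma(S[j,n])] \;\geq\; c\summ_{j=1}^{n-n_0}(n-j+1) \;=\; c\summ_{k=n_0+1}^{n} k.
\end{equation*}
The right-hand side equals $c\cdot\frac{(n+n_0+1)(n-n_0)}{2}$, which is asymptotic to $\frac{c}{2}n^2$ as $n\to\infty$.

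Finally, I would fix any $d$ with $0 < d < c/2$ and observe that for all sufficiently large $n$ (enlarging $n_0$ if necessary, but still the same $n_0$ as in Theorem~\ref{tetel:2}) the above lower bound exceeds $d\cdot n^2$, which gives the claim.

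The argument is essentially routine once Theorem~\ref{tetel:2} and Observation~\ref{observation8} are in hand; the only small subtlety is the verification that the hypothesis of Theorem~\ref{tetel:2} genuinely applies to every suffix of a uniformly random string (which it does, because in a uniformly random string each prefix character is uniform and independent of the characters that follow). Apart from this, the proof is just linearity of expectation plus the arithmetic-series bound above, and the constant $d$ can be chosen to be any value strictly below $c/2$.
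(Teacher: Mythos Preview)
Your proposal is correct and follows essentially the same route as the paper: invoke Observation~\ref{observation8}, apply linearity of expectation, and use Theorem~\ref{tetel:2} on each suffix of length exceeding $n_0$ to obtain a quadratic lower bound. Your write-up is in fact a bit more careful than the paper's (you correctly bound $\ee[\gamma(S[j,n])]$ by $c\cdot m_j$ rather than $c\cdot n$, and you explicitly check that the hypothesis of Theorem~\ref{tetel:2} is met); the only minor wrinkle is the remark about ``enlarging $n_0$ if necessary'', which contradicts the statement's requirement that $n_0$ be the same as in Theorem~\ref{tetel:2} --- but this is harmless, since one may instead simply take $d$ small enough (e.g.\ $d \leq c/(2(n_0+1))$) so that $c\sum_{k=n_0+1}^{n} k \geq d\,n^2$ holds already for every $n>n_0$.
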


\section{Proofs}

\begin{proof} (\autoref{tetel:3})
	
Considering \autoref{observation8} we have that the expected size of the simple suffix tree of a random string $S$ is at least
\begin{equation}
\ee\summ_{m=1}^n(\gamma(S[n-m,n])) \geq \summ_{m=1}^n\ee(\gamma(S[n-m,n])).
\end{equation}

If $m\leq n_0$, \autoref{tetel:2} is obvious. If $m>n_0$, we can divide the sum into two parts:

\begin{equation}\summ_{m=1}^n\ee(\gamma(S[n-m,n]))  = \summ_{m=1}^{n_0}\ee(\gamma(S[n-m,n])) + \summ_{m=n_0+1}^{n}\ee(\gamma(S[n-m,n])).
\end{equation}

The first part of the sum is a constant, while the second part can be estimated with \autoref{tetel:2}:
\begin{equation}
\summ_{m=n_0+1}^{n}\ee(\gamma(S[n-m,n])) \geq \summ_{m=n_0+1}^{n} cn = d\cdot n^2.\end{equation}

This proves \autoref{tetel:3}.

\end{proof}

First, we show a few lemmas about the number of aperiodic strings. \autoref{lemma:1} can be found in \cite{gilbert} or in \cite{cook}, but we give a short proof also here.

\begin{lemma}
	For all $j>0$ integer and for all alphabet of size $\sigma$ the number of aperiodic strings is
	\label{lemma:1}
	\begin{equation}
	\mu(j,\sigma) = \sigma^j - \summ_{\substack{d|j\\d\neq j}}\mu(d,\sigma).
	\end{equation}
\end{lemma}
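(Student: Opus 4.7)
The plan is to derive the stated recursion from the identity
\begin{equation}
\sigma^j \;=\; \sum_{d\mid j}\mu(d,\sigma),
\end{equation}
which gives the lemma immediately by isolating the $d=j$ summand. The guiding idea is that every length-$j$ string over an alphabet of size $\sigma$ is obtained in a unique way by repeating an aperiodic block whose length divides $j$.

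First I would observe that, because $j$ is always a (trivial) period of a length-$j$ string, each such $S$ has a well-defined minimal period $d$ with $d\mid j$. So the $\sigma^j$ strings partition according to their minimal period, as $d$ ranges over the divisors of $j$. For a fixed divisor $d\mid j$, I would set up a bijection between length-$j$ strings whose minimal period equals $d$ and aperiodic strings of length $d$: send $S$ to its prefix $S[1,d]$, with inverse $T\mapsto T\cdot T\cdots T$ ($j/d$ copies).

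Two verifications are needed. In the forward direction: if $S$ has minimal period $d$, then $T:=S[1,d]$ is aperiodic, because any period $d'\mid d$ of $T$ with $d'<d$ would also divide $j$ and propagate to all of $S$, contradicting the minimality of $d$. In the backward direction: if $T$ is aperiodic of length $d$ and $S=T^{j/d}$, then the minimal period of $S$ is exactly $d$. That $d$ is a period is clear; the subtle point, and the main obstacle, is ruling out a strictly smaller period $d'\mid j$ which need not divide $d$. Here I would invoke the Fine--Wilf theorem: when $j/d\geq 2$ we have $j\geq 2d\geq d+d'-\gcd(d,d')$, so any such $d'$ together with the period $d$ would force $S$ to have period $g:=\gcd(d,d')<d$, and hence the block $T=S[1,d]$ would have the period $g\mid d$ with $g<d$, contradicting aperiodicity. (When $j/d=1$ there is nothing to check, as aperiodicity of $T=S$ is the hypothesis.)

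Once the bijection is established, there are exactly $\mu(d,\sigma)$ length-$j$ strings with minimal period $d$, which yields $\sigma^j=\sum_{d\mid j}\mu(d,\sigma)$; subtracting $\mu(j,\sigma)$ from both sides gives the claimed recursion.
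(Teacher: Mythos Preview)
Your proposal is correct and follows essentially the same approach as the paper: partition the $\sigma^j$ strings of length $j$ by their minimal period $d\mid j$, identify each class with the aperiodic strings of length $d$, and read off the recursion. The paper's proof simply asserts that the strings with minimal period $d$ number $\mu(d,\sigma)$, whereas you go further and justify the backward direction of the bijection via Fine--Wilf; this added care is welcome but does not constitute a different route.
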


\begin{proof}

	$\mu(1,\sigma) = \sigma$ is trivial.
	
	There are $\sigma^j$ strings of length $j$. Suppose that a string is periodic with minimal period $d$. This implies that its first $d$ characters form an aperiodic string of length $d$, and there are $\mu(d,\sigma)$ such strings. This finishes the proof.
\end{proof}

Specially, if $p$ is prime, then $\mu(p,\sigma) = \sigma^p-\sigma$.

\begin{corollary}
	\label{lemma:3}
	If $p$ is prime and $t\in\mathbb{N}$, then \begin{math}\mu\(p^t,\sigma\) = \sigma^{p^t}-\sigma^{p^{t-1}}\end{math} for all alphabet of size $\sigma$.
\end{corollary}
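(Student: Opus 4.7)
The plan is to derive this as a direct consequence of \autoref{lemma:1} by induction on $t$, exploiting the very simple divisor structure of prime powers. Since the proper divisors of $p^t$ are exactly $1,p,p^2,\ldots,p^{t-1}$, \autoref{lemma:1} specializes to
\begin{equation}
\mu(p^t,\sigma) \;=\; \sigma^{p^t} - \summ_{s=0}^{t-1}\mu(p^s,\sigma),
\end{equation}
so the whole corollary reduces to showing that the sum on the right equals $\sigma^{p^{t-1}}$.

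First I would verify the base case $t=1$: the only proper divisor of $p$ is $1$, so $\mu(p,\sigma)=\sigma^{p}-\mu(1,\sigma)=\sigma^{p}-\sigma=\sigma^{p^{1}}-\sigma^{p^{0}}$, which is exactly the formula (this also matches the special prime case noted right before the corollary). Then I would perform the inductive step: assuming $\mu(p^s,\sigma)=\sigma^{p^s}-\sigma^{p^{s-1}}$ for every $1\le s\le t-1$, I would substitute this into the sum, split off the $s=0$ term $\mu(1,\sigma)=\sigma$, and watch the remainder telescope:
\begin{equation}
\summ_{s=0}^{t-1}\mu(p^s,\sigma) \;=\; \sigma + \summ_{s=1}^{t-1}\bigl(\sigma^{p^s}-\sigma^{p^{s-1}}\bigr) \;=\; \sigma^{p^{t-1}}.
\end{equation}
Plugging this back into the specialization of \autoref{lemma:1} gives $\mu(p^t,\sigma)=\sigma^{p^t}-\sigma^{p^{t-1}}$, as required.

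There is no real obstacle here; the only thing to be careful about is the indexing in the telescoping sum and making sure the $s=0$ term is handled separately (since the formula $\sigma^{p^s}-\sigma^{p^{s-1}}$ only kicks in for $s\ge 1$). Everything else is a mechanical consequence of the fact that $p^t$ has a totally ordered divisor lattice.
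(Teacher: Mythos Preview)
Your proof is correct. The paper's argument is close in spirit but does not set up an explicit induction: rather than assuming the formula for all smaller exponents and telescoping, it applies the recursion of \autoref{lemma:1} a second time to the single term $\mu(p^{t-1},\sigma)$, so that the residual sum $\summ_{1\le s<t-1}\mu(p^s,\sigma)$ appears with both signs and cancels outright. Your inductive telescoping version is arguably cleaner and keeps better track of the $s=0$ term $\mu(1,\sigma)=\sigma$; the paper's substitution trick reaches the same endpoint in a single algebraic step without invoking an induction hypothesis. Either way the content is the same: the totally ordered divisor lattice of $p^t$ makes the sum in \autoref{lemma:1} collapse to $\sigma^{p^{t-1}}$.
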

\begin{proof}
	We count the aperiodic strings of length $p^t$. There are $\sigma^{p^t}$ strings. Consider the minimal period of the string, i.e. the period which is aperiodic. If we exclude all minimal periods of length $k$, we exclude $\mu(k,\sigma)$ strings. This yields the following equality:
	\begin{equation}
	\mu\(p^t,\sigma\) = \sigma^{p^t} - \summ_{1\leq s<t} \mu\(p^s,\sigma\).
	\label{eq2}
	\end{equation}
	With a few transformations and using \autoref{lemma:1}, we have that \eqref{eq2} is equal to
	\begin{equation}
\sigma^{p^t} - \mu\(p^{t-1},\sigma\) - \summ_{1\leq s < t-1} \mu\(p^s,\sigma\) =\sigma^{p^t} - \sigma^{p^{t-1}} + \summ_{1\leq s < t-1} \mu\(p^s,\sigma\) - \summ_{1\leq s < t-1} \mu\(p^s,\sigma\),
	\end{equation}
	which is
	\begin{equation}
\sigma^{p^t}-\sigma^{p^{t-1}}.
	\end{equation}

\end{proof}
\begin{lemma}
For all $j>1$ and for all alphabet of size $\sigma$ , $\mu(j,\sigma)\leq \sigma^j-\sigma$.
\label{lemma:5}
\end{lemma}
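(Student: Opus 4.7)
The plan is to exhibit $\sigma$ explicit periodic strings of length $j$ and use $\mu(j,\sigma)=\sigma^{j}-(\text{number of periodic strings of length }j)$, which follows from \autoref{lemma:1} (a string of length $j$ is either aperiodic or periodic with some minimal period $d\mid j$, $d<j$).

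Concretely, I would consider the $\sigma$ constant strings of length $j$, that is, the strings $aa\cdots a$ where $a$ ranges over the alphabet. Each such string has $S[i]=S[i+1]$ for every $i\leq j-1$, so it admits $d=1$ as a period. Since we assume $j>1$, we have $1\mid j$ and $1\neq j$, so these strings genuinely qualify as periodic in the sense of the definition. There are exactly $\sigma$ of them, and they are pairwise distinct.

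Therefore the number of periodic strings of length $j$ is at least $\sigma$, hence
\begin{equation}
\mu(j,\sigma)\;\leq\;\sigma^{j}-\sigma,
\end{equation}
which is the claim. There is no real obstacle here; the only subtlety is making explicit why the hypothesis $j>1$ is needed (it guarantees that a constant string genuinely has a period strictly smaller than its length), since for $j=1$ the inequality $\mu(1,\sigma)=\sigma\leq \sigma-\sigma=0$ obviously fails.
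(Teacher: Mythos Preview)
Your proof is correct and is essentially the same as the paper's. The paper invokes \autoref{lemma:1} and notes that the term $\mu(1,\sigma)=\sigma$ always appears in the sum $\sum_{d\mid j,\,d\neq j}\mu(d,\sigma)$ when $j>1$, while you phrase the same observation concretely by exhibiting the $\sigma$ constant strings (which are exactly the strings with minimal period~$1$); both arguments subtract the same $\sigma$ periodic strings from $\sigma^{j}$.
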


\begin{proof}
From \autoref{lemma:1} we have $\mu(j,\sigma) = \sigma^j-\summ_{\substack{d|j\\d\neq j}}\mu(d,\sigma)$. 
Considering $\mu(d,\sigma) \geq 0$ and $\mu(1,\sigma) = \sigma$, we get the claim of the lemma.
\end{proof}

\begin{lemma}
\label{lemma:6}
For all $j\geq 1$, and for all  alphabet of size $\sigma$
\begin{equation}
\mu(j,\sigma) \geq\sigma(\sigma-1)^{j-1}.
\end{equation}
\end{lemma}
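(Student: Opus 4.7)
The plan is to prove the inequality by strong induction on $j$, combining the recurrence of \autoref{lemma:1} with a one-line algebraic split. For the base case $j=1$ there is nothing to do, since $\mu(1,\sigma)=\sigma=\sigma(\sigma-1)^{0}$.

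For the inductive step with $j\ge 2$, \autoref{lemma:1} gives $\mu(j,\sigma)=\sigma^{j}-\sum_{d\mid j,\;d<j}\mu(d,\sigma)$, so it is enough to establish the auxiliary inequality $\sum_{d\mid j,\;d<j}\mu(d,\sigma)\le\sigma^{j-1}$. Indeed, the purely algebraic fact $\sigma^{j}-\sigma^{j-1}\ge \sigma(\sigma-1)^{j-1}$ is equivalent, after dividing by $\sigma(\sigma-1)$, to the trivial inequality $\sigma^{j-2}\ge(\sigma-1)^{j-2}$; combined with the auxiliary bound this produces $\mu(j,\sigma)\ge\sigma^{j}-\sigma^{j-1}\ge\sigma(\sigma-1)^{j-1}$ and closes the induction.

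All the content therefore sits in the auxiliary bound. My plan is to use the trivial estimate $\mu(d,\sigma)\le\sigma^{d}$ (the sharper \autoref{lemma:5} is available but unnecessary) together with the observation that every proper divisor of $j\ge 2$ is at most $\lfloor j/2\rfloor$. This dominates the sum by the geometric series $\sum_{d=1}^{\lfloor j/2\rfloor}\sigma^{d}$, and a straightforward estimate shows this is at most $\sigma^{j-1}$ as soon as $j\ge 4$ and $\sigma\ge 2$, the tightest case being $(j,\sigma)=(4,2)$ where $2+4=6\le 2^{3}=8$.

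The main and really only obstacle is the small-$j$ boundary $j\in\{2,3\}$, which I would treat by hand. In both of these (prime) cases the only proper divisor of $j$ is $1$, so the auxiliary sum collapses to $\mu(1,\sigma)=\sigma\le\sigma^{j-1}$. Once these two cases are disposed of, the geometric estimate handles all remaining $j\ge 4$ uniformly, so the induction goes through.
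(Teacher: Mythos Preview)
Your argument is correct, but it is genuinely different from the paper's. The paper proceeds by a one-step combinatorial induction: from each aperiodic string of length $j-1$ it argues that at most one choice of appended character can produce a periodic string of length $j$, so at least $\sigma-1$ extensions stay aperiodic, and the bound $\sigma(\sigma-1)^{j-1}$ follows immediately. You instead work analytically through the divisor recurrence of \autoref{lemma:1}, bounding $\sum_{d\mid j,\,d<j}\mu(d,\sigma)$ by the geometric tail $\sum_{d\le\lfloor j/2\rfloor}\sigma^{d}\le\sigma^{j-1}$ and then using $\sigma^{j}-\sigma^{j-1}\ge\sigma(\sigma-1)^{j-1}$. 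The paper's route is shorter and more conceptual, though its key claim (``at most one bad extension'') deserves a line of justification; your route is purely arithmetic and entirely self-contained, needing no structural fact about periodic extensions. Two small remarks: first, although you frame the proof as strong induction, you never actually invoke the inductive hypothesis, since the trivial bound $\mu(d,\sigma)\le\sigma^{d}$ already suffices---so what you have is really a direct proof; second, your division by $\sigma(\sigma-1)$ tacitly assumes $\sigma\ge 2$, but the degenerate case $\sigma=1$ is immediate since then both sides vanish for $j\ge 2$.
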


\begin{proof}

We prove by induction. For $j = 1$ the claim is obvious, as $\mu(1,\sigma) = \sigma$.

Suppose we know the claim for $j-1$. Consider $\sigma(\sigma-1)^{j-2}$ aperiodic strings of length $j-1$. Now, for any of these strings there is at most one character by appending that to the end of the string we receive a periodic string of length $j$. Therefore we can append at least $\sigma-1$ characters to get an aperiodic string, which gives the desired result.

\end{proof}

\begin{observation}
	Observe that if the growth of $S$ is $k$, then there is a $j$ such that $S[1,n-k] = S[j+1,j+n-k]$. 
	For example, if the string is $abcdefabcdab$ ($n=12$), one can check that the growth is 8 (the new branch in the suffix tree which ends in leaf $1$ starts after $abcd$), and with $j=6$ we have $S[1,4] = S[7,10] = abcd$.
	
	The reverse of this observation is that if there is a $j<n$ such that $S[1,n-k] = S[j+1,j+n-k]$, then the growth is \textit{at most} $k$, as $S[j+1,n]$ and $S[1,n]$ shares a common prefix of length $n-k$, thus, the paths to the leaves $j+1$ and $n$ share $n-k$ internal nodes, and at most $k$ new internal nodes are created.
	\label{obs_final}
\end{observation}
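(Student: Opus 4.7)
The reverse implication is already justified within the statement, so the plan focuses on the forward direction: assuming the growth of $S$ equals $k$, I want to exhibit $j$ with $1\leq j<n$ such that $S[1,n-k]=S[j+1,j+n-k]$.

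First I would translate the growth into a depth statement in the suffix tree $\ttt$ of $S$. Let $v$ be the deepest internal node on the root-to-leaf-$1$ path with at least two children. By the definition of growth, the distance from leaf $1$ to $v$ is $k+1$ edges, and leaf $1$ sits at depth $n+1$ from the root (counting the final $\$$ edge), so $v$ lies at depth $n-k$. Next I would exploit the branching at $v$: some second child of $v$ is the root of a subtree containing a leaf $j'\neq 1$, and the path label from the root to $v$ is a common prefix of $\ll(1)$ and $\ll(j')$. Since $\$$ does not occur inside $S$, this prefix has length $n-k$ and gives
\begin{equation*}
S[1,\,n-k]=S[j',\,j'+n-k-1].
\end{equation*}
Setting $j:=j'-1$ then yields the advertised identity, with $j\geq 1$ coming from $j'\neq 1$ and $j<n$ coming from $j'\leq n$.

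The only detail that needs extra care is checking that the indices on the right-hand side fit inside $S$, i.e.\ $j'+n-k-1\leq n$; this follows from the elementary bound that a common prefix cannot exceed the length of the shorter suffix $S[j',n]$, which forces $n-k\leq n-j'+1$ and hence $j'\leq k+1$. Beyond this bookkeeping I do not foresee any real obstacle, since the claim is essentially a reformulation of the definition of the branching ancestor of leaf $1$ into a statement about two occurrences of the prefix $S[1,n-k]$ inside $S$.
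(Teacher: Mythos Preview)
Your argument is correct and is precisely the justification the paper leaves implicit: the observation is stated without a formal proof, the forward direction being illustrated only by the example and the reverse direction argued in the statement itself. Your translation of ``growth $=k$'' into ``the deepest branching ancestor of leaf $1$ sits at depth $n-k$'', followed by reading off the shared path label with a second leaf $j'$, is exactly the intended reasoning, and your bookkeeping check $j'\leq k+1$ (ensuring the matched prefix avoids the terminal $\$$) is the only point that genuinely needs attention.
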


\begin{proof}(\autoref{tetel})
For proving the theorem we count the number of strings with growth $k$ for $n\geq 2k$.

First, we fix $j$, and then count the number of possible strings where the growth occurs such that $S[1,n-k] = S[j+1,j+n-k]$ for that fixed $j$. Note that by this way, we only have an upper bound for this number, as we might found an $\ell$ such that $S[1,n-k+1] = S[\ell+1,\ell+n-k+1]$.

We know that $j\leq k$, otherwise $S[j+1, j+n-k]$ does not exist.

If $j=k$, then we know $S[1,n-k] = S[k+1,n]$.

$S[1,k]$ must be aperiodic. Suppose the opposite and let $S[1,k] = p\ldots p$, where $p$ is the minimal period, and its length is $d$. Then $S[k+1,n] = p\ldots p$. Obviously, in this case $S[1,n-d] = S[d+1,n]$, which by \autoref{obs_final} means that the growth would be at most $d$. See also \autoref{thm11:j=k}.

Therefore this case gives us at most $\mu(k)$ strings of growth $k$.

\begin{figure}[h!]
	\centering
	\begin{tikzpicture}
	\draw (-0.1,0) -- (-0.1,1.4) node[right,at start] {1};
	\draw (0,1) rectangle (1,0.5) node[pos=.5] {$p$} ;
	\draw (1.1,1) rectangle (2.1,0.5) node[pos=.5] {$p$};
	\draw[fill] (2.4,0.75) circle [radius=0.025];
	\draw[fill] (2.6,0.75) circle [radius=0.025];
	\draw[fill] (2.8,0.75) circle [radius=0.025];
	\draw (3.1,1) rectangle (4.1,0.5) node[pos=.5] {$p$};
	\draw (4.2,0) -- (4.2,1.4) node[left,at start] {$k$};
	\draw (4.3,1) rectangle (5.3,0.5) node[pos=.5] {$p$} ;
	\draw[fill] (5.6,0.75) circle [radius=0.025];
	\draw[fill] (5.8,0.75) circle [radius=0.025];
	\draw[fill] (6.0,0.75) circle [radius=0.025];
	\draw (6.3,1) rectangle (7.3,0.5) node[pos=.5] {$p$};
	\draw (7.4,1) rectangle (8.4,0.5) node[pos=.5] {$p$};
	\draw (8.5,0) -- (8.5,1.4) node[left,at start] {$n$};
	
	\draw[blue,dashed, thick] (-0.05,0.25) rectangle (7.35,1.15);
	\draw[violet,dashed, thick] (1.05,0.4) rectangle (8.45,1.35);	
	\end{tikzpicture}
	\caption{Proof of \autoref{tetel}, case $j=k$}
	\protect\label{thm11:j=k}
\end{figure}

If $j<k$, then we have $S[1,n-k] = S[j+1,j+n-k]$.

First, we note that $S[1,j]$ must be aperiodic. Suppose the opposite and let $S[1,j] = p\ldots p$, where $p$ is the minimal period, and its length is $d$. Then 
\begin{equation}
S[j+1,2j] = S[2j+1,3j] = \ldots = p\ldots p,
\end{equation}
which means that \begin{equation}
S\left[1,\left\lfloor\frac{k}{j}\right\rfloor\cdot j\right] = S\left[j+1,j+\left\lfloor\frac{k}{j}\right\rfloor\cdot j\right] = p\ldots p.
\end{equation}

This implies that $S[1,j+n-k] = p\ldots pp'$, where $p'$ is a prefix of $p$. However, $S[1,j+n-k-d] = S[d,j+n-k]$ is true, and using \autoref{obs_final}, we have that $\gamma(S) \leq n - (j+n-k) + d = k-j+d<k$, which is a contradiction.

Further, $S[j+n-k+1]$ must not be the same as $S[k+1]$, which means that this character can be chosen $\sigma-1$ ways.

Therefore this case gives us at most $\mu(j)(\sigma-1)\sigma^{k-j-1}$ strings of growth $k$ for each $j$. 

\begin{figure}[h!]
	\centering
	\begin{tikzpicture}
	\draw (-0.1,0) -- (-0.1,1.4) node[right,at start] {1};
	\draw (0,1) rectangle (1,0.5) node[pos=.5] {$p$} ;
	\draw (1.1,1) rectangle (2.1,0.5) node[pos=.5] {$p$};
	\draw[fill] (2.4,0.75) circle [radius=0.025];
	\draw[fill] (2.6,0.75) circle [radius=0.025];
	\draw[fill] (2.8,0.75) circle [radius=0.025];
	\draw (3.1,1) rectangle (4.1,0.5) node[pos=.5] {$p$};
	\draw (4.2,0) -- (4.2,1.4) node[left,at start] {$j$};
	\draw (4.3,1) rectangle (5.3,0.5) node[pos=.5] {$p$} ;
	\draw[fill] (5.6,0.75) circle [radius=0.025];
	\draw[fill] (5.8,0.75) circle [radius=0.025];
	\draw[fill] (6.0,0.75) circle [radius=0.025];
	\draw (6.3,1) rectangle (7.3,0.5) node[pos=.5] {$p$};
	\draw (7.4,1) rectangle (8.4,0.5) node[pos=.5] {$p$};
	\draw (8,0) -- (8,1.4) node[left,at start] {$k$};
	\draw (8.5,1) rectangle (9.5,0.5) node[pos=.5] {$p$};
	\draw[fill] (9.8,0.75) circle [radius=0.025];
	\draw[fill] (10,0.75) circle [radius=0.025];
	\draw[fill] (10.2,0.75) circle [radius=0.025];	
	\draw (10.5,1) rectangle (11.5,0.5) node[pos=.5] {$p$};
	\draw (12.2,0.5) -- (11.6,0.5) -- (11.6,1) -- (12.2,1) node[yshift=-6.25,xshift=-5.2] {$p'$};
	\draw[dotted] (12.2,0.5) -- (12.2,1);
	\draw (12.35,0) -- (12.35,1.4) node[left,at start] {$j+n-k$};
	\draw [decorate,decoration={coil,aspect=0}] (12.4,0.75) -- (14.35,0.75);
	\draw (14.4,0) -- (14.4,1.4) node[left,at start] {$n$};

	\draw[blue,dashed,thick] (-0.05,0.25) rectangle (11.2,1.15);
	\draw[violet,dashed,thick] (1.05,0.4) rectangle (12.3,1.35);	
	\end{tikzpicture}

\caption{Proof of \autoref{tetel}, case $j<k$}
\protect\label{thm11:j<k}
\end{figure}

By summing up for each $j$, we have

\begin{equation}
\phi(k,\sigma) = \summ_{j=1}^{k-1}\mu(j,\sigma)(\sigma-1)\sigma^{k-j-1} + \mu(k,\sigma)
\label{thm11_vege}
\end{equation}

This completes the proof.
\end{proof}

\begin{proof} (\autoref{tetel:2})

According to \autoref{lemma:5}, $\mu(j,\sigma)\leq \sigma^j -\sigma$ (if $j>1$).

In the proof of \autoref{tetel} at \eqref{thm11_vege} we saw for $k\geq 1$ and $n\geq 2k-1$ that
\begin{equation}
\phi(k,\sigma) = \mu(k,\sigma) + \summ_{j = 1}^{k-1}\mu(j,\sigma)(\sigma-1)\sigma^{k-j-1}.
\label{eq8}
\end{equation}
We can bound the right hand side of \eqref{eq8} from above as it follows:
\begin{equation}
\mu(k,\sigma) + \summ_{j = 1}^{k-1}\mu(j,\sigma)(\sigma-1)\sigma^{k-j-1} = \mu(k,\sigma) + \mu(1,\sigma)(\sigma-1)\sigma^{k-2} + \summ_{j=2}^{k-1}\mu(j,\sigma)(\sigma-1)\sigma^{k-j-1},
\end{equation}
which is by \autoref{lemma:5} at most 

\begin{equation}
\sigma^k-\sigma + \sigma(\sigma-1)\sigma^{k-2} + \summ_{j=2}^{k-1}(\sigma^j-\sigma)(\sigma-1)\sigma^{k-j-1} \leq \sigma^k + \sigma^k + \summ_{j=2}^{k-1}\sigma^j\sigma\sigma^{k-j-1} \leq k\sigma^k. 
\end{equation}

Thus, $\phi(k,\sigma)\leq k\sigma^k$, which means 
\begin{equation}
\sum_{k=1}^m \phi(k,\sigma) \leq \summ_{k=1}^m k\sigma^k \leq (m+1)\sigma^{m+1}.
\label{eq12}
\end{equation}

The left hand side of \ref{eq12} is an upper bound for the strings of growth at most $m$.

Let $m = \left\lfloor\frac{n}{2}\right\rfloor$.

As $\sigma^n \gg \frac{n}{2} \sigma^{\frac{n}{2}}$, this implies that in most cases the suffix tree of $S$ has at least $\frac{n}{2}$ more nodes than the suffix tree of $S[1,n-1]$.

Thus, a lower bound on the expectation of the growth of $S$ is 

\begin{equation}
\ee\(\gamma(S)\)\geq \frac{1}{\sigma^n}\(\frac{n}{2}\sigma^{\frac{n}{2}} + \(\sigma^n - \frac{n}{2}\sigma^{\frac{n}{2}}\)\(\frac{n}{2} + 1\) \),
\end{equation}
which is
\begin{equation}
	\frac{1}{\sigma^n}\(\frac{n+2}{2} \sigma^n + \(\frac{n}{2}-\frac{n(n+2)}{4}\)\sigma^{\frac{n}{2}}\) = cn,
\label{equat}
\end{equation}
with some $c$, if $n$ is large enough.

\end{proof}
With this, we have finished the proof and gave a quadratic lower bound on the average size of suffix trees.
	
\bibliographystyle{plain}

\end{document}